\title{The extremal symmetry of arithmetic simplicial complexes}
\author{Benson Farb and Amir Mohammadi \thanks{BF is
supported in part by the NSF.}}
\theoremstyle{plain}
\newtheorem{theorem}{Theorem}[section]
\newtheorem{proposition}[theorem]{Proposition}
\newtheorem{fact}[theorem]{Fact}
\newtheorem{lemma}[theorem]{Lemma}
\def\title{\em}
\def\bar{\overline}
\newcommand\R{\mbox{\bf R}}
\newcommand\F{\mbox{\bf F}}
\DeclareMathOperator\Fix{Fix}
\DeclareMathOperator\Out{Out}
\DeclareMathOperator\Aut{Aut}
\DeclareMathOperator\rank{rank}
\DeclareMathOperator\Isom{Isom}
\DeclareMathOperator\chr{char}
\def\Dbf{\sigma}
\def\bfg{\mathbf{G}}
\def\bfz{\mathbf{Z}}
\def\zbf{\mathbf{Z}}
\def\qbf{\mathbf{Q}}
\def\tbg{\widetilde{\bfg}}
\def\tg{\widetilde{G}}
\def\fbf{\mathbf{F}}
\def\bg{B_\Gamma}
\def\cg{C}
\def\PGL{{\rm{PGL}}}
\def\Aut{{\rm{Aut}}}
\def\id{\operatorname{id}}
\def\h{\hspace{1mm}}
\begin{document}
\maketitle

\section{Introduction}\label{sec;intro}

Let $K$ be a nonarchimedean local field, for example the $p$-adic numbers $\mathbf{Q}_p$ (${\rm char}(K)=0$) or the field of Laurent series over a finite field $\F_p((t))$  (${\rm char}(p)>0$) .  
Let $G=\PGL_n(K)$, or more generally the $K$-points of any absolutely simple, connected, algebraic $K$-group of adjoint form.  

There is a natural way to associate to each 
cocompact lattice $\Gamma$ in $G$ a finite simplicial complex $B_\Gamma$, as follows.  Bruhat-Tits theory (see below) provides a contractible, $\rank_KG$-dimensional simplicial 
complex $X_G$ on which $G$ acts by simplicial automorphisms. The lattice
$\Gamma$ acts properly discontinuously on $X_G$ with
quotient a simplicial complex $B_\Gamma$. \footnote{If
$\Gamma$ has torsion, one needs to barycentrically
subdivide each simplex in $X_G$ in order to make the
quotient a true (not orbi) simplicial complex.}  

Margulis proved (see, e.g., \cite{Ma}) 
that $\rank_KG\geq 2$ implies that every lattice $\Gamma$ in $G$ is arithmetic.   We also note that $\chr(K)=0$ implies every lattice in $G(K)$ is cocompact.  In this paper we explore one aspect of the theme that, since the complex $B_\Gamma$ is constructed using number theory, it should have remarkable properties.  Here we concentrate on the extremal nature of the symmetry of $B_\Gamma$ and all of its covers.

Our first result shows that the simplicial structure of  $B_\Gamma$ realizes all simplicial symmetries of any simplicial complex homeomorphic to $B_\Gamma$.  For any simplicial complex $C$ we denote by $\Aut(C)$ the group of simplicial automorphisms of $C$.  We denote by $|C|$ the simplicial complex $C$ thought of as a topological space, without remembering the simplicial structure.

\begin{theorem}
\label{theorem:first}
Let $K$ be a nonarchimedean local field, and let $G$ be the $K$-points of an absolutely simple, connected algebraic $K$-group of adjoint form with $\rank_KG\geq 2$.  Let $\Gamma$ be a cocompact lattice in $G$, and let 
$B_\Gamma$ be the quotient by $\Gamma$ 
of the Bruhat-Tits building associated to $G$.  Suppose $C$ is any 
simplicial complex homeomorphic to $|B_\Gamma|$.  Then there is an injective homomorphism
$$\Aut(C)\longrightarrow \Aut(B_\Gamma).$$
\end{theorem}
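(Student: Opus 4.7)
The plan is to lift each simplicial automorphism of $C$ to a self-homeomorphism of the Bruhat--Tits building $X_G$, apply topological rigidity of $X_G$ to show the lift is simplicial, and descend to a simplicial automorphism of $B_\Gamma$.

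Assume for concreteness that $\Gamma$ is torsion-free; the general case follows by passing to a subdivision as in the footnote. Fix a homeomorphism $h\colon |C|\to|B_\Gamma|$. For $\phi\in\Aut(C)$, conjugation produces a self-homeomorphism $\tilde\phi:=h\circ|\phi|\circ h^{-1}$ of $|B_\Gamma|$. Since $X_G$ is contractible and $\Gamma$ acts properly discontinuously and freely with quotient $B_\Gamma$, $|X_G|$ is the universal cover of $|B_\Gamma|$, so $\tilde\phi$ lifts to a self-homeomorphism $\Phi\colon|X_G|\to|X_G|$, unique up to composition with a deck transformation $\gamma\in\Gamma$.

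The heart of the argument is to show $\Phi$ is automatically simplicial for the Bruhat--Tits structure on $X_G$. I would do this by characterizing the simplicial stratification of $|X_G|$ topologically: set $n:=\rank_K G\geq 2$ and observe that the link of a $k$-simplex in $X_G$ is a spherical building of dimension $n-k-1$. By the Solomon--Tits theorem this link is homotopy equivalent to a nontrivial wedge of $(n-k-1)$-spheres when $k<n$, and is empty when $k=n$. Taking the join with $S^{k-1}$ gives the link in $|X_G|$ of an interior point of a $k$-simplex, whose topological type encodes both $k$ and the type of the simplex. In particular, top-dimensional interior points are exactly the manifold points of $|X_G|$, and an inductive analysis across codimensions reconstructs the simplicial stratification from local topology alone. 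Thus $\Phi$ must permute simplices of $X_G$, i.e., it is simplicial.

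Once $\Phi$ is simplicial, the identity $\Phi\Gamma\Phi^{-1}=\Gamma$ (automatic since $\Phi$ covers $\tilde\phi$) lets it descend to a simplicial automorphism $\Psi(\phi)$ of $B_\Gamma$. Changing the lift by $\gamma\in\Gamma$ does not affect the quotient, so $\Psi\colon\Aut(C)\to\Aut(B_\Gamma)$ is a well-defined homomorphism. For injectivity, if $\Psi(\phi)=\id$ then each lift $\Phi$ satisfies $\Phi(x)\in\Gamma\cdot x$; by continuity and connectedness of $|X_G|$ this forces $\Phi\in\Gamma$, so one can choose $\Phi=\id$, whence $\tilde\phi=\id$, $|\phi|=\id$, and $\phi=\id$ as a simplicial map.

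The main obstacle is the topological rigidity step: showing that every self-homeomorphism of $|X_G|$ is simplicial with respect to the Bruhat--Tits structure. The rank hypothesis $n\geq 2$ is essential here, as it ensures that the links of non-top-dimensional simplices are positive-dimensional nontrivial spherical buildings, whose topology is rich enough to distinguish simplex types. Making the link-based reconstruction of simplices rigorous, and handling mild torsion in $\Gamma$, will be the chief technical content of the proof.
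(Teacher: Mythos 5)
Your route --- lift $\phi\in\Aut(C)$ to a homeomorphism $\Phi$ of $|X_G|$, argue that $\Phi$ respects the Bruhat--Tits stratification, and descend to $B_\Gamma$ --- is genuinely different from the paper's. The paper instead factors through $\Out(\Gamma)$: it shows $\nu:\Aut(B_\Gamma)\to\Out(\Gamma)$ is an isomorphism (injectivity via a CAT(0) boundary argument showing only the identity moves all points a bounded distance; surjectivity via Mostow--Prasad rigidity coming from Margulis superrigidity), and that $\alpha:\Aut(C)\to\Out(\Gamma)$ is injective, then takes $\nu^{-1}\circ\alpha$. Your construction of the map is in the right spirit, and your ``topological rigidity'' step is essentially Proposition 3.1 of the paper: the paper derives it directly from thickness (a point is a $k$-manifold point iff it lies in an open $k$-simplex), which is more elementary than your Solomon--Tits/link analysis; note also that homeomorphisms need not preserve vertex \emph{types} (elements of $G\setminus G^+$ already permute them), so that part of your claim overreaches, though it isn't needed.

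The real gap is in injectivity. The lift $\Phi$ is only a homeomorphism of $|X_G|$; even after Proposition~3.1 it permutes simplices \emph{as sets} but is not affine on them, so $\Phi$ is not a simplicial automorphism. The relevant object is $\psi(\Phi)$, the simplicial automorphism determined by the induced vertex permutation, and that is what descends to $\Psi(\phi)\in\Aut(B_\Gamma)$. If $\Psi(\phi)=\id$, you learn $\psi(\Phi)\in\Gamma$; after correcting the lift you may arrange $\psi(\Phi)=\id$, i.e.\ $\Phi$ carries each $X_G$-simplex to itself setwise. This does \emph{not} mean $\Phi=\id$, and your assertion ``$\Phi(x)\in\Gamma\cdot x$ for all $x$, hence $\Phi\in\Gamma$'' is unjustified --- it is only true at vertices, so the covering-space argument does not apply. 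To get from ``$\Phi$ fixes each simplex setwise'' to ``$\phi=\id$'' one must show that the induced homomorphism $\iota:\Aut(Y)\to\Aut(X_G)$ is injective (where $Y$ is the universal cover of $C$). That is Proposition~3.4 of the paper, and its proof is the nontrivial step: it uses Smith theory and a mod-$p$ homology argument on a simplex to rule out a finite-order simplicial automorphism of $Y$ that fixes each $X_G$-simplex setwise but is nontrivial. Without this ingredient your injectivity does not close.
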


Of course the simplicial structure on the space $|B_\Gamma|$ coming from the Bruhat-Tits building is not the unique simplicial structure satisfying Theorem \ref{theorem:first}.  One can, for example, take all the top-dimensional simplices of $B_\Gamma$ and subdivide them in the same way, so that the triangulation restricted to any maximal simplex gives a fixed simplicial isomorphism type.  Each of these new triangulations of $|B_\Gamma|$ has automorphism group $\Aut(B_\Gamma)$.   We call such a simplicial structure on $B_\Gamma$ an {\em arithmetic simplicial structure}.  

Our main result is a rigidity theorem characterizing arithmetic simplicial structures among all simplicial structures on $|B_\Gamma|$.  It gives 
a universal constraint on the symmetry of the universal covers of all other simplicial structures on $|B_\Gamma|$.

\begin{theorem}
\label{theorem:second}
Let $G$ and $\Gamma$ as in Theorem \ref{theorem:first} be given.   Fix a normalization of Haar measure $\mu$ on $G$.  Then there exists a constant $N\geq 1$, depending only on the 
$\mu(G/\Gamma)$,  with the following property: Let $C$ be any simplicial complex homeomorphic to $|B_\Gamma|$, and let $Y$ be the universal cover of $C$ (which therefore inherits a $\Gamma$-equivariant simplicial structure from $C$).
Then either:
\begin{enumerate}
\item $[\Aut(Y):\Gamma]<N$, so in particular $\Aut(Y)$ is finitely generated, or
\item $C$ is an arithmetic simplicial structure, and so $\Aut(Y)$ is uncountable and acts transitively on chambers (simplecis of maximal dimension.)
\end{enumerate}

\end{theorem}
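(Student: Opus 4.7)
The strategy is to view $H := \Aut(Y)$ as a locally compact group in which $\Gamma$ sits as a uniform lattice, and to dichotomize according to whether $H$ is discrete; each alternative is then controlled by comparing $H$ to $G$ acting on $X_G$. Since $C$ is finite, $Y$ is locally finite and $V(Y)\subset|X_G|$ is discrete, so $H$ is locally compact in the topology of pointwise convergence on $V(Y)$, with $\Gamma$ a uniform lattice (free, discrete, cocompact). Vertex stabilizers in $H$ are profinite, so $H$ is discrete iff those stabilizers are finite iff $H$ is countable: the discrete vs.\ non-discrete dichotomy for $H$ is precisely the countable vs.\ uncountable dichotomy of the theorem.

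The key lemma, which I take to be the universal-cover counterpart of Theorem~\ref{theorem:first}, is a canonical continuous injection $H \hookrightarrow \Aut(X_G)$ that restricts to the inclusion $\Gamma\hookrightarrow G$ on $\Gamma$. The image is closed, since $V(Y)$ is discrete in $|X_G|$ makes ``permutes $V(Y)$'' a closed condition in $\Aut(X_G)$. By Tits' theorem, $\Aut(X_G)=G\cdot\Omega$ for a compact group $\Omega$ of diagram/field automorphisms, so modulo this harmless compact factor $H$ becomes a closed subgroup of $G$ containing $\Gamma$.

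With the lemma in hand the dichotomy unfolds. In the discrete case, $H$ is a closed discrete subgroup of $G$ containing the lattice $\Gamma$, hence itself a lattice in $G$; Kazhdan-Margulis supplies a positive universal lower bound $c=c(G)$ on the covolume of any lattice in $G$, so
\[
[H:\Gamma] \;=\; \mu(G/\Gamma)/\mu(G/H) \;\leq\; \mu(G/\Gamma)/c \;=:\; N,
\]
which is conclusion (1). In the non-discrete case, $H$ is a closed non-discrete subgroup of $G$ containing $\Gamma$. By Borel density $\Gamma$ is Zariski dense in $G$; combined with the (almost) simplicity of $G$ and the Kneser-Tits-type fact that $G$ has no proper open finite-index subgroups, any such $H$ must equal $G$. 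Hence $G$ acts on $Y$ by simplicial automorphisms, i.e.\ the $Y$-triangulation of $|X_G|$ is $G$-invariant. Since $G$ is chamber-transitive on $X_G$, such a refinement subdivides every Bruhat-Tits chamber in a single combinatorial pattern, which is exactly the definition of an arithmetic simplicial structure. Thus $C$ is arithmetic and $\Aut(Y)\supseteq G$ is uncountable; the local combinatorial symmetry built into an arithmetic subdivision upgrades $G$-transitivity on Bruhat-Tits chambers to chamber-transitivity of $\Aut(Y)$ on all $Y$-chambers, giving conclusion (2).

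The main obstacle is the embedding $H\hookrightarrow\Aut(X_G)$. My plan for proving it is to adapt the argument for Theorem~\ref{theorem:first} directly at the universal-cover level: intrinsically reconstruct the Bruhat-Tits simplicial data on $|X_G|$ from the $Y$-triangulation --- for instance by characterizing Bruhat-Tits vertices and chambers via local link or topological invariants that can be read off from $Y$ --- and then use $\Gamma$-equivariance together with the finite combinatorics of $C$ to check that every $\phi\in H$ permutes the reconstructed Bruhat-Tits structure. Once this lemma is available, the remaining inputs (Kazhdan-Margulis on one side, Borel density plus simplicity of $G$ on the other) are essentially standard.
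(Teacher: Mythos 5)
Your overall architecture---embed $\Aut(Y)$ into $\Aut(X_G)$, dichotomize on discreteness, invoke Kazhdan--Margulis in the discrete case and a Margulis-type theorem in the indiscrete case---matches the paper's strategy. The discrete case and the embedding lemma (Propositions~\ref{homeo-aut} and~\ref{embedding} in the paper, proved via $k$-manifold points and Smith theory rather than your proposed link-invariant reconstruction, but in the same spirit) are in good shape. However, there are two real problems in the indiscrete case.

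First, a minor one: from ``$H$ closed, non-discrete, contains $\Gamma$'' you cannot conclude $H=G$ via Borel density plus simplicity. Borel density gives Zariski density of $\Gamma$, hence of $H$, but a Zariski-dense closed subgroup of a totally disconnected $G$ need not be all of $G$ (lattices themselves are Zariski-dense and closed). The correct tool, used in the paper's Proposition~\ref{discvscof}, is Margulis's theorem on subgroups of finite covolume [Ma, Ch.~II, Thm.~5.1], which yields $H\supseteq G^+$---not $H=G$, and this weaker conclusion is what is actually needed and what the rest of the argument uses.

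Second, and this is the genuine gap: you write ``hence $G$ acts on $Y$ by simplicial automorphisms, i.e.\ the $Y$-triangulation of $|X_G|$ is $G$-invariant. Since $G$ is chamber-transitive on $X_G$, such a refinement subdivides every Bruhat--Tits chamber in a single combinatorial pattern.'' This silently assumes that $Y$ \emph{is} a refinement of $X_G$, i.e.\ that every simplex of $Y$ lies inside a single $X_G$-chamber. That is precisely the hard part of the paper's proof, and it does not follow for free from $G^+$-invariance: a priori a simplex of $Y$ could straddle a panel of $X_G$. The paper rules this out by an induction on dimension that uses the thickness of $X_G$ together with Fact~\ref{fact:flip} (for any panel $\sigma$ with three chambers $\alpha_1,\alpha_2,\alpha_3$ through it, some $u\in G^+$ fixes $\alpha_1$ and swaps $\alpha_2,\alpha_3$), then applies the Smith-theory argument of Proposition~\ref{embedding} to get $\varphi|_{C_0}=\id$ and derives a contradiction: the interior point $\beta(x)$ would be interior to two distinct $k$-cells of $Y$. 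Without this step your argument does not establish that $C$ is an arithmetic simplicial structure, so conclusion (2) is not actually reached.
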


\begin{remarks}
\hfill
\begin{enumerate}
\item Theorem \ref{theorem:second} is not true in the case that $\rank_KG=1$, i.e.\ when 
$X_G$ is a tree. An example is given in Section~\ref{sec;example}. The obstruction in this case is the fact that $\Aut(X_G)$ is ``far" from $G.$ 

\item One is tempted to weaken the hypotheses of Theorem \ref{theorem:first} and Theorem \ref{theorem:second}, for example to only require that $C$ is homotopy equivalent to $|B_\Gamma|$ rather than homeomorphic to it.  However the conclusion of each theorem is not true in this case, 
even for $C$ of the same dimension as $|B_\Gamma|$.  One can see this by taking, for any given $n\geq 2$, a triangulation of the closed disk $D^2$ by dividing $D^2$ into $n$ equal sectors based at the origin. This triangulation is   
invariant by the $2\pi/n$ rotation. Now let $C$ be the complex obtained by attaching the central vertex of $D^2$ to some vertex of $B_\Gamma$.  It is clear that $\Aut(C)$ contains $\bfz/n\bfz$.  
Since $n\geq 2$ was arbitrary,  he conclusions both of Theorem \ref{theorem:first} and of Theorem \ref{theorem:second} do not hold.

\item Theorem \ref{theorem:first} (resp. Theorem \ref{theorem:second}) is a simplicial analogue of a theorem of Farb-Weinberger from Riemannian geometry, given in \cite{FW1} (resp. \cite{FW2}).  However, the mechanism giving rigidity is different here.  Further, the type of generality achieved in the theorems in \cite{FW2} seems not to be possible in the simplicial setting, since  counterexamples abound, as the last remark indicates.
\end{enumerate}
\end{remarks}

One consequence of Theorem \ref{theorem:second} is the following.  Suppose $B_\Gamma$ has more than one top-dimensional simplex; this can always be achieved by passing to a finite index subgroup of $\Gamma$.  Now build a new triangulation $C$ of $|B_\Gamma|$ by subdividing the top-dimensional simplices of $B_\Gamma$, so that the resulting triangulations on some pair 
of such simplices are not simplicially isomorphic.  Then Theorem \ref{theorem:second} implies that $[\Aut(Y):\Gamma]<\infty$.  

Another way to think of this is that, if we paint the (open) top-dimensional simplices of $B_\Gamma$ with colors, and if we use at least $2$ distinct colors, the group of color-preserving automorphisms of the universal cover of $B_\Gamma$ is discrete, and contains $\Gamma$ as a subgroup of finite index.  This result is actually an ingredient in the proof of Theorem \ref{theorem:second}, and so is proven first. Such a result does not hold when $\rank_KG=1$. We give explicit examples of this failure in Section~\ref{sec;example}. 

\bigskip
\noindent
{\bf Outline of paper. }After giving some preliminary material on Euclidean buildings in \S\ref{sec;notation}, we prove the main results in \S\ref{sec;proof}.  In \S\ref{sec;example} we 
give an explicit example of a $B_\Gamma$ satisfying the hypotheses of Theorem \ref{theorem:first} and Theorem \ref{theorem:second} and a non-example in rank one case.

\bigskip
\noindent
{\bf Standing assumption. }All simplicial structures considered in this paper are assumed to be locally finite.

{\bf Acknowledgments.}
We would like to thank K.~Wortman and S. Weinberger for many helpful comments.

\section{Geometry and automorphisms of Euclidean buildings}
\label{sec;notation}

We now recall some facts from Bruhat-Tits theory which will be needed in this paper.  We refer the reader to \cite{AB},~\cite{W} and to \cite{Ti2} for these facts and definitions of terms.

\subsection{The building ${\bf X_G}$ }

Let $K$ be a nonarchimedean local field. Let $\bfg$ be the adjoint form of an absolutely almost simple, connected,  simply connected algebraic group defined over $K.$ Let $G=\bfg(K)$.

The Bruhat-Tits theory associates a contractible simplicial complex $X_G$ to $G$ on which $G$ acts by simplicial automorphisms.  This is easiest to describe if we work with the simply connected cover of $\bfg.$ So let $\tbg$ be the simply connected cover of $\bfg$ and let $\tg=\tbg(K).$   
Let $$r:=\rank_K(\bfg)$$

An {\em Iwahori subgroup} $I$ of $\tg$ is the normalizer of a Sylow pro-$p$-subgroup of $\tg.$ These subgroups are conjugate to each other since the Sylow subgroups are conjugate. The {\em Euclidean (or affine)  building} $X_G$ associated with $G$ is a simplicial complex defined as follows.  The 
vertices of $X_G$ correspond bijectively with maximal compact subgroups of $\tg$. A collection of maximal compact subgroups gives a simplex in $X_G$ precisely when their intersection contains an Iwahori subgroup. $X_G$ is a contractible simplicial complex whose dimension equals ${\rm{rank}}_K\bfg.$ In particular, if ${\rm{rank}}_K\bfg=1$ then $X_G$ is a tree.

We will need the following properties of $X_G$.

\begin{enumerate}
\item $X_G$ is {\em thick}; that is, any $i$-simplex of $X_G$ with $i<r:=\dim(X_G)$ is contained in at least three $(i+1)$-simplices.  

\item  Given any apartment (maximal flat) $A$ in $X_G$, any 
$(r-1)$-dimensional simplex lying in $A$ is contained in precisely two $r$-simplecis of $A$.

\item Any two simplices of $X_G$ are contained in a common apartment.
\end{enumerate}

\subsection{The action of ${\bf G}$}

The groups $\tg$ and $G$ act simplicially on $X_G$ by conjugation.  The stabilizer in $\tg$ of any vertex of $X_G$ is a maximal compact subgroup of $\tg.$   
There are $r+1$ orbits of vertices of $X_G$ under the $\tg$-action.  In this way each vertex is given a {\em type}. The action of $\tg$ on $X_G$ is type-preserving, and is transitive on the set of chambers (simplecies of maximal dimension) in $X_G$.   

Let $G^+$ be the normal subgroup of $G$ generated by all the unipotent radicals of $K$-parabolic subgroups of $G.$ The group $G^+$ is the image of $\tg$ under the covering map 
$\tbg\rightarrow \bfg$.  For example, if $G=\PGL_n(K)$ then $G^+=\mbox{PSL}_n(K)$; see e.g.~\cite[Chapter I]{Ma}. The covering map restricted to the unipotent subgroups is injective since the kernel of the covering map is the center of $\tbg$.  The subgroup $G^+$ is cocompact in $G$, and indeed is finite index when ${\rm char}(K)=0$.  
Further, $G^+$ acts by type-preserving automorphisms on $X_G$.

Denote by $\Aut_{\rm alg}(G)$ the group of algebraic automorphisms of $G$.  This group is the semidirect product of $G$ with the group of automorphisms of the Dynkin diagram for (the Lie algebra corresponding to) $G$, which is a group of order at most $2$ 
(see~\cite[Theorem 2.8]{PR}).  Let $\Aut_G(K)$ denote the group of field automorphisms $\sigma$ of $K$ such that ${}^\sigma \bfg$ and $\bfg$ ar $K$-isomorphic where ${}^\sigma \bfg$ is the group obtained from $\bfg$ by applying $\sigma$ to the defining equations.  The group $G$ is a locally compact topological group under the topology coming from that of $K$.  We then have   (see ~\cite{BT}) that the group 
of automorphisms of $G$ which we denote by $\Aut(G)$ is an extension of $\Aut_{\rm alg}(G)$ by $\Aut_G(K)$ i.e. the sequence 

$$1\rightarrow\Aut_{\rm alg}(G)\rightarrow\Aut(G)\rightarrow\Aut_G(K)\rightarrow 1$$
is an exact sequence. If $\bfg$ is a $K$-split algebraic group, then $\Aut(G)=\Aut_{\rm alg}(G)\rtimes\Aut(K)$, see~\cite[5.8, 5.9, 5.10]{Ti1} and references there.

From the description of $X_G$ given above, one sees that 
the group $\Aut(G)$ acts on the $X_G$ by simplicial automorphisms, 
giving a representation $$\rho: \Aut(G)\to\Aut(X_G).$$

The central theorem about automorphisms of buidlings is the following.

\begin{theorem}[Tits \cite{Ti1}]
\label{thm:tits}
Assume that ${\rm{rank}}_K\mathbf{G}>1$.  Then the representation

$$\rho: \Aut(G)\to\Aut(X_G)$$
is an isomorphism.
\end{theorem}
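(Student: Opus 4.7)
The plan is to prove the injectivity and surjectivity of $\rho$ separately, following the general strategy that powers Tits' rigidity theorems for spherical and Euclidean buildings of rank $\geq 2$. For injectivity, suppose $\phi\in\ker\rho$. Then $\phi$ fixes every vertex of $X_G$, hence preserves every maximal compact subgroup of $\tg$, and in particular acts trivially on the set of types of vertices. Using the exact sequence $1\to\Aut_{\rm alg}(G)\to\Aut(G)\to\Aut_G(K)\to 1$, one first rules out a nontrivial field automorphism (any such acts nontrivially on the coordinates of a maximal $K$-split torus and hence on some apartment), then rules out a nontrivial Dynkin-diagram automorphism (which permutes types of vertices nontrivially), and finally reduces to the inner case. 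Since $\bfg$ is of adjoint form, the center of $G$ is trivial, and a nontrivial element of $G$ moves some chamber because $G^+$ is already chamber-transitive on $X_G$; this forces $\phi=\id$.

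For surjectivity, let $\phi\in\Aut(X_G)$ be arbitrary. The key ingredient is the strong transitivity of $G$ (indeed of $G^+$) on pairs $(c,A)$ where $c$ is a chamber contained in an apartment $A$. After composing $\phi$ with a suitable element of $G$, I may therefore assume that $\phi$ fixes a base chamber $c_0$ and setwise preserves an apartment $A_0\supset c_0$. The restriction $\phi|_{A_0}$ is an automorphism of the Coxeter complex $A_0$ fixing $c_0$, and hence corresponds to an automorphism $\tau$ of the extended Dynkin diagram of $G$. The problem then becomes: show that $\phi$ is determined by $\phi|_{A_0}$, and that the diagram data $\tau$ actually lifts to an element of $\Aut_{\rm alg}(G)\rtimes\Aut_G(K)$ whose induced action on $X_G$ equals $\phi$.

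To pass from $\phi|_{A_0}$ to $\phi$ globally I would propagate the identification across adjacent apartments, using the retraction maps $\rho_{A_0,c_0}:X_G\to A_0$ and the thickness property (property (1)) together with property (3) (any two simplices lie in a common apartment). Thickness and rank $\geq 2$ combine to give the Moufang property at infinity for the spherical building associated to $X_G$, and Tits' fundamental theorem for irreducible spherical buildings of rank $\geq 2$ then recovers the root groups, hence the BN-pair, hence $G$ itself, from the combinatorial data on a pair of opposite chambers in the building at infinity. Chasing this reconstruction through, the automorphism $\phi$ of $X_G$ determines an automorphism of the root datum of $G$, which is precisely the content of an element of $\Aut_{\rm alg}(G)\rtimes\Aut_G(K)=\Aut(G)$.

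The main obstacle, and the only place where the hypothesis $\rank_K\bfg\geq 2$ is used essentially, is this last algebraization step: turning a combinatorial automorphism of $X_G$ into an honest automorphism of $G$ as an algebraic-topological group. In rank one the building is a tree and property (2) does not suffice to rigidify anything, so $\Aut(X_G)$ is vastly larger than $\Aut(G)$; in rank $\geq 2$, thickness plus the existence of apartments forces enough incidence relations that the root groups can be detected combinatorially. Once this step is granted, the injectivity argument above shows the lift is unique and $\rho$ is an isomorphism.
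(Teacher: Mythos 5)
This theorem is stated in the paper only as a citation to Tits \cite{Ti1}; the paper gives no proof and uses it as a black box. So there is no argument in the paper to compare yours against, and the honest assessment is of your sketch on its own merits.

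Your overall architecture --- injectivity via trivial action, surjectivity via strong transitivity, reduction to a chamber-and-apartment-fixing automorphism, and algebraization through the Moufang structure of the spherical building at infinity --- is indeed the right shape for Tits' argument. Two points do not hold up, however. First, in the injectivity step, the assertion that ``a nontrivial element of $G$ moves some chamber because $G^+$ is already chamber-transitive'' is a non-sequitur: transitivity of $G^+$ on chambers says nothing about whether a given $g\neq 1$ can fix every chamber. The correct argument is that the kernel of $G\to\Aut(X_G)$ is a normal subgroup of $G$ contained in every parahoric (being in every vertex stabilizer), hence compact and normal, hence central by simplicity of $G^+$, hence trivial by adjointness. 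Second, the surjectivity argument is not a proof but a pointer: the step ``chasing this reconstruction through, the automorphism of $X_G$ determines an automorphism of the root datum of $G$'' is exactly the content of the Moufang classification, i.e.\ of Tits' theorem itself, and cannot be taken for granted in a proof of that theorem. You also need to verify that the resulting abstract automorphism of $G$ is continuous (the paper uses the fact that $\rho$ is an isomorphism of \emph{topological} groups), and your identification $\Aut(G)=\Aut_{\rm alg}(G)\rtimes\Aut_G(K)$ is stated in the paper only for $K$-split $\bfg$; in general one only has the exact sequence $1\to\Aut_{\rm alg}(G)\to\Aut(G)\to\Aut_G(K)\to 1$. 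In short: the outline is sensible, the injectivity argument has a real gap as written, and the surjectivity argument is circular unless fleshed out with Tits' actual machinery.
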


Note that $G$, which is subgroup of index at most $2$ in $\Aut_{\rm alg}(G),$ is a normal subgroup of $\Aut(X_G).$   The group $\Aut(X_G)$ is a locally compact group with respect to the compact-open topology.  This topology coincides with the topology on $\Aut(X_G)$ determined by the property that the sequences of neighborhoods about the identity map correspond to sets of automorphisms that are the identity on larger and larger balls in $X_G$.  
On the other hand, the groups $G$ and $\Aut_G(K)$ inherit a topology from the topology on $K$.  The isomorphism given in Theorem \ref{thm:tits} is an isomorphism of topological groups.


\subsection{Apartments and root subgroups}

The apartments (maximal flats) in $X_G$ correspond to maximal diagonalizable subgroups in $\tg.$ Suppose $S$ is a maximal diagonalizable subgroup of $\tg$, and let $A$ be the corresponding apartment in $X_G.$ Then $S$ acts on $A$ by translation. The root subgroups corresponding to $S$ acts on $X_G$ as follows.  Any root subgroup determines a family of parallel hyperplanes in $A$.  
If $u$ lies in the root subgroup it will fix a half-apartment of $A$, i.e.\  one component of the complement of some hyperplane $P$ in $A$. Moreover, $P$ is an intersection of apartments, and the action of the root group is transitive on the link of $P$ ( see \S 1.4 and \S 2.1 of \cite{Ti2} or, 
alternatively, Proposition 18.17 of \cite{W}). In particular we have the following.
 
\begin{fact}
\label{fact:flip}
Let $G$ be as above.  Then for any $(r-1)$-simplex $\sigma$ of $X_G$, and for any three $r$-simplices $\alpha_1,\alpha_2,\alpha_3$ having $\sigma$ as their common intersection, there exists an element $\phi\in G^+$ fixing $\alpha_1$ and switching $\alpha_2$ and $\alpha_3$.
\end{fact}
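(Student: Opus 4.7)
My plan is to construct $\phi$ as a product $\phi=vn$, where $n\in G^+$ realizes a reflection of an apartment that swaps $\alpha_2$ and $\alpha_3$ while fixing $\sigma$, and $v\in G^+$ is a correction that fixes both $\alpha_2$ and $\alpha_3$ so as to force $\phi(\alpha_1)=\alpha_1$.

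By property (3) there is an apartment $A$ of $X_G$ containing both $\alpha_2$ and $\alpha_3$; by property (2) these are then the only two chambers of $A$ meeting $\sigma$, and they are separated by the hyperplane $P\subset A$ that contains $\sigma$. Let $T_A\subset \tg$ be the maximal diagonalizable subgroup of $\tg$ corresponding to $A$. Via the Bruhat--Tits action of $N_{\tg}(T_A)$ on $A$ by affine isometries, the reflection of $A$ across $P$ is realized by some element $n\in N_{\tg}(T_A)$; its image in $G^+$ under the covering $\tg\to G^+$, which I continue to denote by $n$, fixes $\sigma$ pointwise and swaps $\alpha_2\leftrightarrow\alpha_3$. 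If $n(\alpha_1)=\alpha_1$ we are done, so assume $\alpha_1':=n(\alpha_1)$ is distinct from $\alpha_1$. Bijectivity of $n$, combined with $n(\alpha_2)=\alpha_3$ and $n(\alpha_3)=\alpha_2$, forces $\alpha_1'\ne \alpha_2,\alpha_3$ as well.

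It remains to produce $v\in G^+$ that fixes both $\alpha_2$ and $\alpha_3$ and sends $\alpha_1'$ to $\alpha_1$, for then $\phi=vn$ directly satisfies $\phi(\alpha_1)=\alpha_1$, $\phi(\alpha_2)=\alpha_3$, and $\phi(\alpha_3)=\alpha_2$. The existence of such a $v$ is the main technical obstacle, and I expect to handle it by passing to the rank $1$ residue of $\sigma$: the stabilizer of $\sigma$ in $\tg$ has a reductive quotient isomorphic to $M_\sigma(k)$, where $M_\sigma$ is a rank $1$ reductive algebraic group over the residue field $k$ of $K$, and the set of chambers of $X_G$ containing $\sigma$ is canonically identified with the flag variety $M_\sigma(k)/B(k)$ for a Borel subgroup $B\subset M_\sigma$. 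Under this identification the stabilizer of two distinct chambers corresponds to a maximal $k$-torus $T\subset M_\sigma$, and the orbit of $T(k)$ on the remaining chambers is all of the complement, generalizing the familiar fact that the diagonal torus of $\mathrm{PGL}_2$ acts transitively on $\mathbb{P}^1\setminus\{0,\infty\}$. Lifting the required $T(k)$-element through the projection of the stabilizer of $\sigma$ in $\tg$ onto $M_\sigma(k)$ yields $v\in \tg$, whose image in $G^+$ completes the construction of $\phi$.
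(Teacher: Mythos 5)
The paper intends ``switching'' to mean only $\phi(\alpha_2)=\alpha_3$ (this is all that is used in the proof of Theorem~\ref{theorem:second}: ``$u|_{C_0}=\id$ and $u(C_1)=C_2$''), and its proof is the one-line Moufang argument already stated in the preceding paragraph: pick an apartment $A$ containing $\alpha_1$, let $\alpha$ be the affine root of $A$ with $\alpha_1\subset\bar\alpha$ and $\sigma\subset\partial\alpha$; then the root group $U_\alpha\subset G^+$ fixes the half-apartment $\bar\alpha$ pointwise (in particular fixes $\alpha_1$ pointwise) and acts simply transitively on the chambers containing $\sigma$ other than $\alpha_1$ (Weiss, Prop.~18.17), so some $u\in U_\alpha$ sends $\alpha_2$ to $\alpha_3$. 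Your proof instead aims for a genuine involutory swap, which is both more than is needed and, as it turns out, more than is true.

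The gap is in the last step: you claim the pointwise stabilizer of the two chambers $\alpha_2,\alpha_3$ in the panel residue acts transitively on the remaining $q-1$ chambers, ``generalizing the fact that the diagonal torus of $\mathrm{PGL}_2$ acts transitively on $\mathbb{P}^1\setminus\{0,\infty\}$.'' But the residue action of $G^+$ need not be all of $\mathrm{PGL}_2(\mathbb{F}_q)$; it can be only $\mathrm{PSL}_2(\mathbb{F}_q)$, whose two-point stabilizer (the image of the torus acting by $x\mapsto t^2x$) has two orbits, squares and non-squares, when $q$ is odd. For a concrete failure take $\tbg=\mathrm{Sp}_4$ over $\mathbf{Q}_3$ (so $\rank_KG=2$) and a panel $\sigma$ on a long wall: the derived group of the reductive quotient is $\mathrm{SL}_2$ with central torus acting trivially on the residue $\mathbb{P}^1(\mathbb{F}_3)$, so $G^+$ acts through $\mathrm{PSL}_2(\mathbb{F}_3)\cong A_4$ on $4$ points, which is $2$-transitive but not $3$-transitive. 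In particular, the orbit of $\alpha_1'=n(\alpha_1)$ under the two-point stabilizer need not contain $\alpha_1$, so the correcting element $v$ does not exist, and indeed \emph{no} element of $G^+$ fixes one chamber of the residue while swapping two others (no element of $A_4$ fixes a point and transposes two of the remaining three). This shows both that your transitivity claim fails and that the literal ``swap'' reading of the Fact is false; the correct and needed statement is the one-directional one obtained directly from root-group transitivity.
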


As an example consider $G=\PGL_2(\mathbf{Q}_p)$.  Then $X_G$ is a $(p+1)$-regular tree. Let $\ell$ be the apartment in $X_G$ corresponding to the diagonal group of $G$. In this case $\ell$ is 
a bi-infinite geodesic in $X_G$. Let $\ell(0)$ be the vertex corresponding to $\PGL_2(\zbf_p)$, 
i.e.\ the vertex corresponding to the standard lattice $\zbf_p^2.$  The geodesic ray 
$\ell([0,\infty))$ is a half-apartment based at $\ell(0)$. The above fact gives that there are elements $u_1,\ldots,u_{p-1}$ in one of the corresponding root groups (more precisely $u_i$'s are strictly upper triangular matrices) which map the ray $\ell((-\infty,0])$ to the other $(p-1)$-rays based at $\ell(0)$ intersecting $\ell$ only at that point.  These may be taken to be the representatives of nontrivial cosets in $\zbf_p/p\zbf_p$ if we identify the root group with the additive group $\mathbf{Q}_p.$

In this paper we will assume ${\rm{rank}}_K\bfg>1$.  Suppose $\Gamma$ is a lattice in $G$. Then the Margulis Superrigidity Theorem, proved in positive characteristic by Venkataramana~\cite{V} (see also \cite{Ma}), implies (by an argument of Margulis) that $\Gamma$ is superrigid and hence arithmetic.

\section{Proving extremal symmetry}

\label{sec;proof}

We begin by proving some lemmas and propositions that are used in the proof of both 
Theorem \ref{theorem:first} and Theorem \ref{theorem:second}.  

\subsection{Topological (non)rigidity of $X_G$}
\label{section:topological}

The following is a kind of topological rigidity result for $X_G$: it 
gives that the topological structure of $X_G$ remembers the simplicial structure. It is worth mentioning that in section~\ref{section:topological} we only need $Y$ to be locally compact simplicial complex homeomorphic to $X_G.$

\begin{proposition}\label{homeo-aut}
Let $f:X_G\rightarrow X_G$ be a homeomorphism. Then $f$ maps $k$-dimensional simplicies of $X_G$ onto $k$-dimensional simplicies for each $0\leq k\leq{\rm{dim}}(X_G).$ Hence there is a natural homomorphism $$\psi:{\rm{Homeo}}(X_G)\rightarrow\Aut(X_G).$$
\end{proposition}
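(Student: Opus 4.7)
The plan is to topologically characterize the simplicial structure of $X_G$ by a downward induction on the dimension of skeleta, using the thickness of $X_G$ and of all its links. At each skeletal level, the interiors of top-dimensional simplices will turn out to be precisely the manifold points, which is an invariant preserved by any homeomorphism.

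First I would characterize the open chambers (interiors of $r$-simplices, where $r=\dim X_G$) as the manifold points of $X_G$. A point in the interior of an $r$-simplex clearly has a Euclidean neighborhood of dimension $r$. Conversely, if $p$ lies in the interior of a $j$-simplex $\sigma$ with $j<r$, a small neighborhood of $p$ in $X_G$ is homeomorphic to $\mathbb{R}^j \times \Cone(\mathrm{Link}_{X_G}(\sigma))$, where $\mathrm{Link}_{X_G}(\sigma)$ is a thick spherical building of dimension $r-j-1$. By thickness (Property~1 of \S\ref{sec;notation}, which is inherited by all links of $X_G$), every codimension-one face of this link is contained in at least three top-dimensional faces, so the link is not a manifold --- in particular it is not $S^{r-j-1}$ --- and therefore the neighborhood of $p$ in $X_G$ is not $\mathbb{R}^r$. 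Hence the set of non-manifold points of $X_G$ coincides with $X_G^{(r-1)}$, which must be preserved by $f$, and $f$ permutes the open chambers (the connected components of its complement).

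Next I would induct downward. Suppose $f$ preserves $X_G^{(k+1)}$ as a topological subspace. Inside $X_G^{(k+1)}$, a point in the interior of a $(k+1)$-simplex has a Euclidean neighborhood of dimension $k+1$, while a point $p$ in the interior of a $j$-simplex $\sigma$ with $j\leq k$ has a neighborhood in $X_G^{(k+1)}$ homeomorphic to $\mathbb{R}^j \times \Cone\bigl((\mathrm{Link}_{X_G}(\sigma))^{(k-j)}\bigr)$, the key point being that $(k+1)$-simplices of $X_G$ containing $\sigma$ correspond to $(k-j)$-simplices of the link. By the same inherited thickness, every codim-one face of this $(k-j)$-skeleton is contained in at least three top-dimensional faces of it, so it is non-manifold and in particular not $S^{k-j}$. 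Therefore the manifold points of $X_G^{(k+1)}$ of local dimension $k+1$ are exactly the open $(k+1)$-simplices, and their complement $X_G^{(k)}$ is also preserved by $f$.

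Iterating from $k=r-1$ down to $k=0$, $f$ preserves every skeleton $X_G^{(k)}$ and permutes the open $k$-simplices (the connected components of $X_G^{(k)}\setminus X_G^{(k-1)}$). Since $f$ is a homeomorphism, taking closures shows that it sends each closed $k$-simplex bijectively onto another closed $k$-simplex and preserves the face relation, which is just containment of closures. Thus $f$ induces a simplicial automorphism $\psi(f)\in\Aut(X_G)$, and the assignment $\psi\colon\Homeo(X_G)\to\Aut(X_G)$ is tautologically a group homomorphism. The main obstacle is the inherited-thickness argument in the inductive step --- namely, that lower skeleta of thick spherical buildings are never spheres --- but this follows cleanly from the fact that the thickness property of $X_G$ passes to all its links, and thence to all of their skeleta.
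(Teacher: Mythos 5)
Your proof is correct, and it is the same basic strategy as the paper's: identify the simplicial structure topologically by locating manifold points, using thickness to show that lower-dimensional strata fail to be manifold. The difference is that you run a genuine downward induction on skeleta, showing at each stage that the $(k+1)$-manifold points of the subspace $X_G^{(k+1)}$ are precisely the open $(k+1)$-simplices (since $(k+1)$-simplices through a $j$-cell $\sigma$ correspond to $(k-j)$-simplices of $\mathrm{Link}(\sigma)$, whose $(k-j)$-skeleton inherits thickness). The paper instead states directly that, for every $0\le k\le r$, a point is a ``$k$-manifold point'' of $X_G$ (meaning it has a neighborhood in $X_G$ homeomorphic to $\mathbb{R}^k$) if and only if it lies in the interior of a $k$-simplex. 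Read literally this is false for $k<r$: a point in the interior of a lower-dimensional simplex has no Euclidean neighborhood of any dimension in $X_G$, precisely because its link is thick and hence not a sphere. So the paper's phrasing only isolates the chamber interiors; recovering the lower skeleta requires exactly the kind of iterated/relative argument you give. In short, your proposal is faithful to the intended argument and is in fact a more careful and complete version of it.

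One small caution worth noting explicitly if you expand this: to conclude that the $(k-j)$-skeleton of $\mathrm{Link}(\sigma)$ is not a sphere you need $k-j-1<\dim\mathrm{Link}(\sigma)=r-j-1$, i.e.\ $k<r$, which indeed holds throughout the downward induction since you start at $k=r-1$. Also, the assertion that thickness of $X_G$ passes to links is a standard fact about buildings and should be cited, since your inductive step leans on it.
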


\begin{proof}
We call a point $x\in X_G$ a {\em $k$-manifold point} of $X_G$ if a $x$ has some neighborhood homeomorphic to $\R^k$, and $k$ is the maximal such number so that this is true.  As mentioned above, $X_G$ is a thick building,  that is for each $k<\mbox{dim}(X_G),$ every $k$-dimensional simplex of $X_G$ is the face of at least three $(k+1)$-dimensional simplices of $X_G$.    
From this we clearly have the following:

\medskip
{\it Let $x\in X_G$ be any point.  Then $x$ is a $k$-manifold point if and only if $x$ lies in the interior of a $k$-simplex of $X_G$.}

\medskip
As being a $k$-manifold point is clearly a topological property for any fixed $k$, it follows that any homeomorphism $f:X_G\to X_G$ maps $k$-manifold points to themselves, and therefore $f$ 
maps open $k$-simplices into open $k$-simplices, for each $0\leq k\leq\dim(X_G)$.   
Applying the same argument to $f^{-1}$, we see that $f$ maps each open $k$-simplex 
of $X_G$ homeomorphically onto an open $k$-simplex of $X_G$.  

As $f$ is a homeomorphism it preserves adjacencies between simplices, and so $f$ induces a simplicial automorphism of $X_G$.  This association of $f$ to the simplicial automorphism it induces is clearly a homomorphism.
\end{proof}

Recall that $Y$ and $X_G$ are homeomorphic. Thus we have that $$\Aut(Y) \subseteq{\rm{Homeo}(Y)}\approx {\rm{Homeo}}(X_G).$$ We will denote by $\iota$ the restriction 
to $\Aut(Y)$ of the homomorphism $\psi$ defined in Proposition~\ref{homeo-aut}.

\begin{lemma}
Let $\Aut(Y)$ and $\Aut(X_G)$ be endowed with the compact-open topology.  Then the 
homomorphism $\iota:\Aut(Y)\to\Aut(X_G)$ is proper and continuous.
\end{lemma}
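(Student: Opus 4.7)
The plan is to decompose $\iota$ as the composition
$$\Aut(Y)\hookrightarrow\Homeo(Y)\xrightarrow{F\mapsto hFh^{-1}}\Homeo(X_G)\xrightarrow{\psi}\Aut(X_G),$$
where $h:Y\to X_G$ is the fixed homeomorphism (which exists since $Y$ is homeomorphic to $X_G$) and $\psi$ is the map from Proposition~\ref{homeo-aut}. The inclusion and the conjugation are continuous in the compact-open topology by standard facts about mapping spaces of locally compact Hausdorff spaces, so both continuity and properness of $\iota$ reduce to an analysis of $\psi$ and of how $h$ relates the two simplicial structures.

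\medskip
\noindent\textbf{Continuity.} I would first prove that $\psi$ is continuous. Fix a compact $K\subset X_G$; since $X_G$ is locally finite, only finitely many simplices meet $K$ and only finitely many vertices $v_1,\dots,v_m$ are incident to them. If $F_n\to F$ in $\Homeo(X_G)$, then uniform convergence on compacta gives $F_n(v_i)\to F(v_i)$ for each $i$. By Proposition~\ref{homeo-aut}, each $F_n(v_i)$ and $F(v_i)$ is a vertex, and since the vertex set is a discrete subset of $X_G$ we conclude $F_n(v_i)=F(v_i)$ for $n$ large. A simplicial automorphism is determined by its action on vertices, so $\psi(F_n)=\psi(F)$ on $K$ eventually, which is exactly convergence in $\Aut(X_G)$.

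\medskip
\noindent\textbf{Properness.} Let $(f_n)\subset\Aut(Y)$ with $\iota(f_n)\to g$ in $\Aut(X_G)$. Because $Y$ is locally finite and countable (being the universal cover of a finite complex by a countable group), a standard diagonal extraction shows $(f_n)$ is precompact as soon as the orbit $\{f_n(v)\}$ is finite for every vertex $v$ of $Y$. Fix such a $v$ and let $\sigma$ be the open simplex of $X_G$ containing $h(v)$. Convergence $\iota(f_n)\to g$ implies $\iota(f_n)(\sigma)=g(\sigma)$ for $n$ large, and by construction $\iota(f_n)(\sigma)=\psi(H_n)(\sigma)=H_n(\sigma)$ with $H_n:=hf_nh^{-1}$. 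Therefore
$$h(f_n(v))=H_n(h(v))\in H_n(\sigma)=g(\sigma)\qquad\text{for $n$ large,}$$
so $f_n(v)\in h^{-1}(\overline{g(\sigma)})$. The latter set is compact in $Y$ and, by local finiteness, contains only finitely many vertices, as required. Combining this precompactness with the continuity of $\iota$ gives that $\iota^{-1}$ of any compact set is compact.

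\medskip
\noindent\textbf{Main obstacle.} The essential subtlety is that convergence $\iota(f_n)\to g$ is a purely combinatorial statement about simplicial automorphisms of $X_G$, while each $f_n$ is a simplicial automorphism of the a priori unrelated complex $Y$. The bridge is that although $H_n=hf_nh^{-1}$ need not be simplicial for $X_G$'s structure, Proposition~\ref{homeo-aut} forces it to permute the open simplices of $X_G$, and that permutation is precisely $\iota(f_n)$. This is exactly what lets combinatorial convergence of $\iota(f_n)$ trap the image $h(f_n(v))$ inside a fixed open simplex $g(\sigma)$ of $X_G$, which in turn traps $f_n(v)$ in a fixed finite set of vertices of $Y$.
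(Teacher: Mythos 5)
Your proposal is correct and follows essentially the same route as the paper's own (quite terse) proof: the paper asserts that $\iota^{-1}$ of a compact set is bounded and that a sequence $\varphi_n$ with $\iota(\varphi_n)$ convergent is precompact, and your argument simply fleshes out the mechanism behind these assertions — namely, that combinatorial convergence of $\iota(f_n)$ traps each $h(f_n(v))$ in a fixed closed simplex, so local finiteness confines $f_n(v)$ to a finite set, after which diagonal extraction (applied to $f_n$ and, implicitly, to $f_n^{-1}$ to get bijectivity of the limit) yields precompactness.
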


\begin{proof}
Continuity follows from the definitions.  To see that $\iota$ is proper, first note that $\Aut(Y)$ is locally compact since $Y$ is assumed to be locally finite. and that for any compact set $K$ in $\Aut(X_G)$ we have that $\iota^{-1}(K)$ is bounded. Now suppose that we are given any sequence $\varphi_n\in\Aut(Y)$ such that $\{\iota(\varphi_n)\}$ converges to $\tau\in\Aut(X_G)$.  Then $\{\varphi_n\}$ is pre-compact in 
$\Aut(Y)$, and for any limit point $\varphi_\infty$ we have that $\iota(\varphi_\infty)=\tau$.  Hence $\iota$ is a proper map.
\end{proof}

In contrast to rigidity, it is easy to see that the kernel of $\psi$ is huge.  Indeed it clearly contains 
the infinite product, over all maximal simplices $\sigma$, of the group of homeomorphisms 
of the closed $\dim(X_G)$-disk which are the identity on $\partial\sigma$.  On the other hand, when restricted to the subgroup $\Aut(Y)$, the map $\psi$ is injective.

\begin{proposition}\label{embedding}
The homomorphism $\iota:\Aut(Y)\rightarrow\Aut(X_G)$ is injective
\end{proposition}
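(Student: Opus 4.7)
The plan is to show $\varphi=\mathrm{id}$ by proving that $\varphi$ fixes every vertex of $Y$; since a simplicial automorphism of $Y$ is determined by its action on vertices, this suffices.  Set $f := h\circ\varphi\circ h^{-1}:X_G\to X_G$.  The hypothesis $\iota(\varphi)=\mathrm{id}$ is equivalent to $\psi(f)=\mathrm{id}$, which, by the description of $\psi$ in Proposition~\ref{homeo-aut}, says that $f$ preserves every open $X_G$-simplex setwise.  In particular $f$ fixes every $X_G$-vertex pointwise, since each vertex is its own $0$-simplex.

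First I would record a finiteness observation: for each $Y$-vertex $v$, let $\sigma$ be the $X_G$-simplex with $h(v)\in\mathrm{int}(\sigma)$; then the $\varphi$-orbit of $v$ lies in the compact set $h^{-1}(\bar{\sigma})$, which by local finiteness of $Y$ contains only finitely many $Y$-vertices, so the orbit is finite.  Next I would proceed by induction on $d := \dim\sigma$.  The base case $d=0$ is immediate: $h(v)$ is then an $X_G$-vertex, fixed by $f$, so $\varphi(v)=v$.  For the inductive step, assume every $Y$-vertex whose $h$-image lies in the interior of an $X_G$-simplex of dimension less than $d$ is fixed by $\varphi$, and consider $v$ with $\dim\sigma=d\geq 1$.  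The finite set $V_\sigma := \{Y\text{-vertices } u : h(u)\in\mathrm{int}(\sigma)\}$ is then permuted by $\varphi$, while every $Y$-vertex whose $h$-image lies in $\partial\bar{\sigma}$ is fixed by the inductive hypothesis.  The task at this stage is to use that $\varphi$ is a simplicial automorphism of $Y$---respecting all $Y$-adjacencies between $V_\sigma$ and the now-fixed $Y$-vertices on $\partial\bar{\sigma}$---to conclude that the induced permutation of $V_\sigma$ is trivial, and in particular $\varphi(v)=v$.

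The main obstacle lies in closing this last step of the induction.  A priori $\varphi$ could permute $V_\sigma$ by any permutation compatible with the $Y$-adjacency structure; and $V_\sigma$ can have several elements (for example when $Y$ is a non-trivial subdivision of the interior of $\sigma$).  However, since $\varphi$ acts as the identity on the $Y$-vertices of $h^{-1}(\partial\bar{\sigma})$ and must respect the way interior $Y$-simplices are glued to the boundary, a careful combinatorial argument---exploiting local finiteness of $Y$ and the rigidity of triangulations of the closed $d$-disk $h^{-1}(\bar{\sigma})$ extending a prescribed boundary triangulation---forces the induced permutation of $V_\sigma$ to be the identity.  Once this combinatorial rigidity is established, $\varphi$ fixes every $Y$-vertex, so $\varphi=\mathrm{id}$ and $\iota$ is injective.
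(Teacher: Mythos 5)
Your strategy is close in spirit to the paper's (an induction over $X_G$-simplices, starting from the fact that $\iota(\varphi)=\id$ forces $h\circ\varphi\circ h^{-1}$ to preserve each open $X_G$-simplex), but the mechanism you invoke to close the inductive step is different from the paper's and, as written, does not work. The paper handles the step with Smith theory: after noting that $\varphi$ preserves a given $X_G$-simplex $D$, fixes $\partial D$ pointwise by induction, and has finite order on $D$ (because $\varphi$ permutes the finitely many $Y$-simplices meeting $D$), one gets a $\mathbf{Z}/p$-action on the pair $(D,\partial D)$; Smith's theorem then forces $\Fix(\varphi|_D)=D$, with no reference to the $Y$-triangulation near $D$ at all.

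Your replacement idea---``rigidity of triangulations of the closed $d$-disk $h^{-1}(\bar\sigma)$ extending a prescribed boundary triangulation''---presupposes that the $Y$-simplicial structure restricts to a triangulation of $h^{-1}(\bar\sigma)$, i.e.\ that $h^{-1}(\bar\sigma)$ is a subcomplex of $Y$. That is exactly what you may \emph{not} assume here: $Y$ is merely a simplicial complex homeomorphic to $X_G$, and a $Y$-simplex can perfectly well straddle several $X_G$-simplices. (That every $Y$-simplex lies inside a single $X_G$-chamber is the hard content of the proof of Theorem~\ref{theorem:second}, established only \emph{after} Proposition~\ref{embedding} and only under the extra hypothesis that $\Aut(Y)$ is indiscrete.) Consequently the set $V_\sigma$ of interior $Y$-vertices can have $Y$-neighbours lying in adjacent chambers, which at dimension $d$ are not yet controlled by your inductive hypothesis; so ``respecting the $Y$-adjacencies to the fixed boundary vertices'' is not a constraint confined to a triangulated disk, and the propagation argument you have in mind (pushing rigidity across codimension-one faces from a boundary-adjacent top simplex) has nothing to propagate through. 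You flag this step as the main obstacle, and it is a genuine one: to make a purely combinatorial argument go through one would first have to prove the simplex-containment claim, which is not available at this stage. The Smith-theory route avoids the issue entirely because it only needs the \emph{topological} finite-order action of $\varphi$ on $D$ rel $\partial D$, not any compatibility between the two simplicial structures.
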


\begin{proof}
Let $\varphi\in\mbox{ker}(\iota).$  We will argue inductively on the dimension $k\geq 0$ that $\varphi$ is the identity on the $k$-skeleton of $Y$.  Since $\iota(\varphi)=\id,$ we get $\varphi(v)=v$ for any vertex $v\in X_G$.  Now assume that $\varphi$ is identity on each $j$-simplex of $X_G$ for 
each $j<k$.   Let $D$ be any $k$-simplex of $X_G$.  Since $\iota(\varphi)=\id$, we have from the definition of $\iota$ that $\phi(D)\subseteq D$.  By induction we have that $\phi(x)=x$ for each $x\in \partial D$.  

Since $Y$ is a locally finite complex and $\varphi$ is simplicial automorphism of $Y$, we have that orbits of points under $\varphi$ are discrete.  Since $D$ is compact, there are only $m$ 
simplices of $Y$ intersecting $D$ for some $m<\infty$.  It follows that there exists $n$, depending only on $m$, so that $\varphi^n(x)=x$ for any $x\in D$.   Let $\tau:=\varphi|_D$.

Suppose that $\tau\neq\id$.  Then after raising $\tau$ to a power we can (and will) assume that $\tau$ has order $p$ for some prime $p$.   

Since we have a $p$-group $\langle\tau\rangle$ acting on a closed disk $D$, we can apply Smith  Theory to this action.   The pair $(D,\partial D)$ is of course a homology $k$-ball.  By Smith's Theorem (see, e.g.\ \cite{Br}, Theorem III.5.2), the pair $(\mbox{Fix}(\tau),\mbox{Fix}(\tau|_{\partial D})$ 
is a mod-$p$ homology $r$-ball for some $0\leq r\leq k$.    Since $\tau|_{\partial D}=\id$ by the induction hypothesis, we have that $\mbox{Fix}(\tau|_{\partial D})={\partial D}$, it follows that $r=k$.   

Now suppose that $\Fix(\tau)\neq D$.   Pick $x\in D$ in the complement of 
$\Fix(\tau)$.  Then radial projection away from $x$ to $\partial D$ gives 
a homotopy equivalence of pairs 
$$(\mbox{Fix}(\tau),\mbox{Fix}(\tau|_{\partial D})\simeq (\partial D,\partial D).$$
But this contradicts the fact that $(\mbox{Fix}(\tau),\mbox{Fix}(\tau|_{\partial D}))$ 
is a mod-$p$ homology $k$-disk with $k>0$, since as such, we have 
$$H_k(\mbox{Fix}(\tau),\mbox{Fix}(\tau|_{\partial D});\mathbf{Z}/p\mathbf{Z})
\neq 0=H_k(D,{\partial D};\mathbf{Z}/p\mathbf{Z}).$$

Thus it must be that $\Fix(\tau)=D$; that is, $\tau=\id$.  We have just proven that 
$\phi|_D=\id$ for each $k$-simplex $D$ of $X_G$, so by the 
induction on $k$ we have $\phi=\id$, as desired.
\end{proof}

\subsection{The extremal symmetry of $B_\Gamma$}

With the results from subsection \ref{section:topological} in hand, we can now prove Theorem \ref{theorem:first}.

\bigskip
\noindent
{\bf Proof of Theorem \ref{theorem:first}.}  
First note that any simplicial automorphism of $B_\Gamma$ induces an automorphism of $\pi_1(B_\Gamma)=\Gamma$, well-defined up to conjugacy.  We thus have a homomorphism 
$$\nu:\Aut(B_\Gamma)\to\Out(\Gamma)$$
where $\Out(\Gamma)$ is the group of {\em outer automorphisms} of $\Gamma$, i.e.\ the quotient of $\Aut(\Gamma)$ by inner automorphisms.  

We claim that $\nu$ is injective.  
 Suppose $f\in\ker(\nu)$.   Since $B_\Gamma$ is aspherical and $f_\ast$ acts trivially (up to conjugation) on $\pi_1(B_\Gamma)$ , it follows that $f$ is freely homotopic to the identity map.   
 Metrize $B_\Gamma$ so that it has the path metric induced by giving each simplex the standard Euclidean metric; $X_\Gamma$ then inherits a unique path metric making the covering $X_\Gamma\rightarrow B_\Gamma$ a local isometry.  
 
Since $f$ is homotopic to the identity and since $B_\Gamma$ is compact, each track in this homotopy moves points of $B_\Gamma$ some uniformly bounded distance $D$.  Thus $f$ has some lift $\widetilde{f}\in\Aut(X_G)$ such that $\widetilde{f}$ moves each point of $X_G$ at most a distance $D$.   We claim that the only element of $\Aut(X_G)$ that moves all points of $X_G$ at most a uniformly bounded distance is the identity automorphism.  Given this claim, it follows 
that $\widetilde{f}$, and hence $f$, is the identity, so that $\nu$ is injective.

The claim is well known, but for completeness we indicate a proof.  The building $X_G$ admits a nonpositively curved (in the ${\rm CAT}(0)$ sense) metric with the property that $\Aut(X_G)=\Isom(X_G)$.  Now, the boundary $\partial X_G$ of $X_G$ as a nonpositively curved space, namely the set of Hausdorff equivalence classes of infinite geodesic rays, can be identified with the 
spherical Tits building associated to $G$ (see~\cite[Theorem 8.24 and Chapter 28]{W}) . By the nonpositive curvature condition, infinite geodesic rays in $X_G$ either stay a uniformly bounded distance from each other, hence represent the same equivalence class in $\partial X_G$, or 
diverge with distance between point being unbounded.  If an element $\phi\in\Aut(X_G)$ moves all points of $X_G$ a uniformly bounded distance, it follows that $\phi$ induces the identity map on $\partial X_G$.  
But the natural homomorphism $\Aut(X_G)\to\Aut(\partial X_G)$ is injective (see~\cite{Ti1} or~\cite[Theorem 12.30 and Section 28.29]{W}), from which it follows that $\phi$ is the identity, proving the claim.

We now claim that $\nu$ is surjective, and thus is an isomorphism. To see this, note that by the assumptions on $G$, we can apply the 
Margulis Superrigidity Theorem, proved in positive characteristic by Venkataramana~\cite{V}, see~\cite{Ma}, to the lattice $\Gamma$ in $G$.  This gives 
in particular that $\Gamma$ satisfies {\em strong (Mostow-Prasad) rigidity}, which means that 
any automorphism of $\Gamma$ can be extended to a continuous homomorphism of $G.$  Note that the group of continuous automomorphisms of $G$ is precisely $\Aut(X_G).$  Thus given 
any $h\in\Out(\Gamma)$, there is some $h'\in \Aut(X_G)$ extending (a representative of) $h$, and so preserving $\Gamma$ in $G$.  Thus $h'$ descends to the desired automorphism of $B_\Gamma$, proving that $\nu$ is surjective.  We have thus shown that $\nu$ is an automorphism.

Now each $\varphi\in\Aut(\cg)$ induces an automorphism of $\pi_1(\cg)=\Gamma,$ which is well-defined up to conjugacy. Hence we obtain a homomorphism  $$\alpha:\Aut(\cg)\rightarrow\mbox{Out}(\Gamma).$$  

Since we just proved that $\nu^{-1}:\Out(\Gamma)\to\Aut(B_\Gamma)$ is an isomorphism, to prove the theorem it is enough to prove that $\alpha$ is injective.  
To this end, consider $\varphi\in\ker(\alpha)$.  Let $Y$ denote the universal cover of $C$.  Then, just as in the argument above,  $\varphi$ lifts to some $\widetilde{\varphi}$ moving points a bounded distance from the identity.  Here we are using the metric induced from the simplicial structure on $X_G$, not on $Y$ (although these metrics are uniformly comparable, so it doesn't actually matter).  By Proposition \ref{embedding} the homomorphism $\iota:\Aut(Y)\rightarrow\Aut(X_G)$ is one to one.  As was mentioned above the only element of $\Aut(X_G)$ moving points a uniformly bounded distance is the identity, we have $\widetilde{\varphi}=\id$, so that $\varphi=\id$ and $\alpha$ is injective, as desired. $\diamond$
\bigskip

\subsection{Characterizing $X_G$ among all simplicial structures}

The following result, crucial to our proof of Theorem \ref{theorem:second}, gives the consequence discussed at the end of the introduction.

\begin{proposition}[Coloring rigidity]
\label{discvscof}
Let the notation and assumptions be as above. Then 
precisely one of the following holds:
\begin{itemize}
\item[(i)] $\Aut(Y)$ is discrete.
\item[(ii)] $G^+\subseteq\iota(\Aut(Y))$, where $\iota$ is the monomorphism in Proposition~\ref{embedding}.
\end{itemize}
\end{proposition}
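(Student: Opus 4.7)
My plan is to study the image $H := \iota(\Aut(Y))$ inside $\Aut(X_G)$. The preceding lemma shows $\iota$ is a continuous proper injection, so $H$ is closed in $\Aut(X_G)$ and $\iota$ is a homeomorphism onto its image; in particular $\Aut(Y)$ is discrete if and only if $H$ is discrete. Moreover $\Gamma \subseteq H$: the $\Gamma$-action on $Y$ as deck transformations corresponds under $\iota$ to the original simplicial $\Gamma$-action on $X_G$ (via any chosen homeomorphism $Y \to X_G$ compatible with a fixed homeomorphism $C \to |B_\Gamma|$). The dichotomy in the proposition therefore reduces to the claim: if $H$ is non-discrete, then $G^+ \subseteq H$.

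Assuming $H$ is non-discrete, I would first pass to a problem inside $G^+$. By Theorem \ref{thm:tits} together with the description of $\Aut(G)$ in \S\ref{sec;notation}, the subgroup $G^+$ is open in $\Aut(X_G)$ (in characteristic zero it has finite index in $G$, which has finite index in $\Aut(X_G)$). Hence $H^+ := H \cap G^+$ is closed in $G^+$, inherits non-discreteness from $H$ (any element of $H$ close enough to the identity lies in the open set $G^+$), and contains the cocompact lattice $\Gamma^+ := \Gamma \cap G^+$. The proposition thus reduces to the claim: \emph{any closed, non-discrete subgroup $L$ of $G^+$ that contains a cocompact lattice must equal $G^+$.}

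To establish this reduced claim I would combine three ingredients. (a) \emph{Borel density}: $\Gamma^+$ is Zariski dense in the absolutely simple, $K$-isotropic group $G^+$, hence so is $L$. (b) \emph{Extraction of a root subgroup element}: non-discreteness provides $h_n \in L \setminus \{1\}$ with $h_n \to 1$, so each $h_n$ fixes a ball $B(x_0, R_n)$ pointwise with $R_n \to \infty$. Translating basepoints by $\Gamma^+$ (which acts cocompactly on $X_G$) and exploiting the thickness of $X_G$ together with the Moufang structure of thick rank $\ge 2$ Euclidean buildings---by which the pointwise stabilizer in $\Aut(X_G)$ of a half-apartment is exactly a root subgroup $U_\alpha$---one produces a non-trivial element $u \in L$ lying in some root subgroup, concretely realized as a ``flip'' of the kind guaranteed by Fact \ref{fact:flip}. (c) \emph{Propagation}: the $\Gamma^+$-conjugates $\gamma u \gamma^{-1}$ all lie in $L$, and by the Zariski density in (a) they populate a Zariski-dense family of root subgroups; since the root subgroups over all $K$-roots generate $G^+$, passage to the closed subgroup of $L$ they generate forces $L \supseteq G^+$.

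The main obstacle is step (b): fixing an arbitrarily large ball in $X_G$ is strictly weaker than fixing a half-apartment, so extracting a genuine root-subgroup element requires a careful combinatorial argument using thickness, the cocompact $\Gamma^+$-action, and the classification of fixed-point sets of simplicial automorphisms of a thick Euclidean building. A cleaner alternative, if one prefers to import machinery, is to invoke a rigidity classification of closed subgroups of a simple $p$-adic group containing a lattice (in the style of Margulis's work cited as \cite{Ma}): such results say the only options for $L$ are to be discrete (hence commensurable with $\Gamma^+$) or to contain $G^+$, which is precisely the dichotomy demanded by the proposition.
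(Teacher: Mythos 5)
Your endgame matches the paper's: both reduce to the Margulis-type classification of closed subgroups of $G$ that contain a lattice, and the paper cites precisely such a result, [Ma, Chapter II, Theorem 5.1] (a closed, non-discrete subgroup $H$ of $G$ with $G/H$ of finite invariant volume contains $G^+$). Where the routes diverge is in producing a non-discrete closed subgroup of $G$ (or $G^+$) to feed into that theorem. You pass to $H^+ := \iota(\Aut(Y)) \cap G^+$ and assert non-discreteness via openness of $G^+$ in $\Aut(X_G)$; but you only justify this in characteristic zero, and it is genuinely delicate in positive characteristic, where $G/G^+$ need not be finite (e.g.\ for $\PGL_n(\F_q((t)))$ with $p \mid n$, $K^*/(K^*)^n$ is infinite profinite), and this is exactly the setting of the paper's explicit example. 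The paper instead works with $H := G \cap \iota(\Aut(Y))$ and proves it indiscrete without any openness: $H$ is a \emph{normal} subgroup of the indiscrete group $\iota(\Aut(Y))$ (because $G$ is normal in $\Aut(X_G)$) and contains $\Gamma$; a sequence $g_n \to \mathrm{id}$ in $\iota(\Aut(Y))$ yields $g_n\gamma g_n^{-1} \in H$ with $g_n\gamma g_n^{-1} \to \gamma$, and if $H$ were discrete, finite generation of $\Gamma$ (from property T, using $\rank_K G \geq 2$) forces $g_n$ to centralize all of $\Gamma$ for large $n$, whereupon the Mostow-rigidity argument from the proof of Theorem~\ref{theorem:first} forces $g_n = \mathrm{id}$, a contradiction. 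This is more involved than an openness observation but is uniform in the characteristic. Finally, your primary plan (a)--(c) is a plausible sketch of a direct proof of the Margulis step, but you have correctly identified that (b) is a real gap; the paper makes no attempt at a direct proof and simply invokes Margulis, so your ``fallback'' is in fact the paper's actual argument. I would drop (a)--(c), make the citation the main step, and replace the openness claim with the normality/property-T argument (or supply a proof of openness of $G^+$ valid in all characteristics).
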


\begin{proof}
Recall that $\iota$ is a proper map. Hence $\iota(\Aut(Y))$ is a closed subgroup of $\Aut(X_G)$ with respect to the compact-open topology.  The continuity of $\iota$ together with Proposition~\ref{embedding} imply that if $\iota(\Aut(Y))$ is discrete then $\Aut(Y)$ is discrete and so (i) holds and we are done. 

We thus assume now that (i) does not hold.    So there is a sequence of elements $\varphi_n\in\Aut(Y)$ such that $\{g_n=\iota(\varphi_n)\}$ converges to the identity in $\Aut(X_G).$ 

Note that $\Gamma\subseteq\Aut(Y)$ and, with this abuse of notation, $\iota(\Gamma)=\Gamma.$ Note that $H:=G\cap\iota(\Aut(Y))$ is closed normal subgroup of $\iota(\Aut(Y))$ containing $\Gamma.$ We claim that $H$ is indiscrete. Assume the contrary and let $g_n$ be as above.  Since 
$g_n$ converge to identity it follows that $g_n\gamma g_n^{-1}\rightarrow\gamma$ for any $\gamma\in\Gamma.$ Since $H$ is normal and discrete, and since $\Gamma\subset H$, 
it follows that $g_n\gamma g_n^{-1}=\gamma$ for $n$ large enough. 

By the assumption $\mbox{rank}_KG\geq 2$, the group $G$ has Kazhdan's property T, and so 
the lattice $\Gamma$ in $G$ is finitely generated.   Hence there exists some $n_0$ such that if $n>n_0$ then $g_n\gamma g_n^{-1}=\gamma$ for all $\gamma\in\Gamma$ 

Thus for each $n\geq n_0$ we have that $g_n$ centralizes $\Gamma$. Such $g_n$ however is the trivial isometry. This follows, for example, from the proof of Theorem~\ref{theorem:first}. To be more explicit $g_n$ centralizes $\Gamma$ thus it induces the trivial isometry of $B_\Gamma,$ since $\Aut(B_\Gamma)$ and $\Out(\Gamma)$ are isomorphic, as we showed in loc. cit. Note now that $g_n$ centralizes $\Gamma$ so the action of $g_n$ on $X_G$ is trivial. Thus $g_n$ is identity if $n\geq n_0,$ which is a contradiction. Hence $H$ is indiscrete.

Recall that $G/\Gamma$ has a finite $G$-invariant measure and $\Gamma\subset H,$ hence $G/H$ has a finite $G$-invariant measure, namely the direct image of the measure on $G/\Gamma$ under the natural map $G/\Gamma\rightarrow G/H.$ Also we showed above that  
$H$ is an indiscrete subgroup of $G$. Now \cite[Chapter II, Theorem 5.1]{Ma} states that such a subgroup must contain $G^+$, as we wanted to show.
\end{proof}

We are now ready to prove the main theorem of this paper.

\bigskip
\noindent
{\bf Proof of Theorem \ref{theorem:second}. } There is nothing to prove if $\Aut(Y)$ is discrete, so 
suppose this is not the case. By Proposition \ref{embedding} and Proposition \ref{discvscof}, there exists a subgroup $H\subseteq\Aut(Y)$ such that $\iota:H\to G^+$ is an isomorphism.  It follows from the definition of $\iota$ that if $\varphi\in\Aut(Y)$ then  $\iota(\varphi)(v)=\varphi(v)$ for any vertex $v\in X_G$.  Actually, the proof of Proposition~\ref{homeo-aut} immediately gives: if $D$ is any simplex in $X_G$ then $\iota(\varphi)(D)=\varphi(D)$ is a simplex of $X_G$ whose verticies are the $\varphi$-images of the vertices of $D.$

We claim that for any simplex $\Dbf$ of $Y$, there is some chamber (simplex of maximal dimension) $C$ of $X_G$ such that $\Dbf\subseteq C.$  We prove this by induction on the dimension $k\geq 0$ of the cell $\Dbf.$ When $k=0$ this is trivial.  Now assume the claim is true up to dimension $k-1$.  

Let 
 $C(k)$ be the standard Euclidean $k$-dimensional simplex, and 
 $\beta: C(k)\rightarrow\Dbf$ be a simplicial parameterization. The induction hypothesis guarantees that any simplex of $\beta(\partial (C(k)))$ is contained some chamber of $X_G$.  Of course   
this chamber may not be unique.  If $\beta(C(k))$ is not contained in a single chamber,  
then there exists $x\in C(k)^\circ$, a neighborhood $B_\delta(x)\subseteq C(k)^\circ$, and two adjacent chambers $C_0$ and $C_1$ of $X_G$ such that $B_\delta(x)\cap C_i^\circ\neq\varnothing$ for each $i=0,1.$ Without loss of generality we can (and will) assume that $\beta(x)\in C_0\cap C_1.$

Since the building $X_G$ is thick,  there exists a chamber $C_2$ distinct from $C_0$ and $C_1$ such that the facet (i.e.\ codimension one face) $C_0\cap C_1$ is a facet of $C_2$ also.  
By Fact \ref{fact:flip} above, there is an element $u\in G^+$ such that $u|_{C_0}=\id$ and $u(C_1)=C_2.$ Let $\varphi\in H$ be such that $\iota(\varphi)=u.$ We have $\varphi(C_0)=C_0$ and $\varphi(C_1)=C_2.$ We also have $\varphi(v)=v$ for each vertex $v$ of $C_0.$ 

Now if we argue as in the proof of Proposition~\ref{embedding} we get $\varphi|_{C_0}=\id.$ This  implies that $\varphi|_{\beta(B_\delta(x)\cap C_0)}=\id$ and $\varphi(\beta(B_\delta(x)\cap C_1))\subseteq C_2.$ Recall however that $\varphi$ is a simplicial automorphism of $Y$, so that $\varphi(\Dbf)$ is another $k$-cell of $Y.$ Further, $\varphi(\beta(x))=\beta(x)$ is an interior point for two different $k$-cells of $Y$, namely $\Dbf$ and $\varphi(\Dbf)$. This is a contradiction.  Thus the claim that $\sigma$ lies in some chamber of $X_G$ follows.  

Let $C$ be a chamber which is a fundamental domain for the standard action of $G^+$ on 
$X_G$.   As a consequence of the claim above, we have that the restriction of the simplicial structure of $Y$ to each chamber of $X_G$ gives a simplicial  subdivision of the chamber. In particular $C$ is simplicially subdivided by $Y.$ Recall that since $\Aut(Y)$ is indiscrete, there is a subgroup $H$ which is isomorphically mapped to $G^+$ by $\iota.$ Now as $G^+$ acts transitively on chambers, we have that if $C'$ is any chamber of $X_G$ then there is some $\varphi\in H$ such that $\iota(\varphi)(C)=C'$.  By the remark we made in the beginning of the proof, we have $\varphi(C)=C'.$ The proof of the theorem is now complete. 
$\diamond$

\section{Explicit examples}
\label{sec;example}

In this section we give explicit examples of the arithmetic complexes to which Theorem \ref{theorem:first} and Theorem \ref{theorem:second} apply.  We then give examples in the rank one case where the loc. cit. do not apply. 

\bigskip
\noindent
{\bf An explicit example where Theorems~\ref{theorem:first} and~\ref{theorem:second} apply.} The explicit construction of these examples is given~\cite{LSV}, using lattices constructed in~\cite{CS}. These examples where constructed as explicit examples of ``{\em Ramanujan complexes}". Similar (explicit) constructions of complexes for which the above theorems holds are possible in characteristic zero using lattices constructed in \cite{CMSZ1}, \cite{CMSZ2} and \cite{MS}. 

Let $G=\PGL_3(\fbf_2((y)))$.  We want to describe a quotient of $X_G$ by a lattice $\Gamma$ which is a congruence subgroup of a lattice $\Gamma'$, where $\Gamma'$ acts simply transitively on the vertices of $X_G.$ Note that the building $X_G$ is in fact a {\em clique complex}; that is, 
any set of $k+1$ vertices is a cell if and only if every two vertices form a $1$-cell. This property holds for quotient complexes as well.  Thus, in order to describe the simplicial complex $B_\Gamma.$ it suffices to describe the Caley graph of $\Gamma'/\Gamma$ with an explicit set of generators.   
  
Let $t$ be a generator for the field of $16$ elements whose minimal polynomial is $t^4+t+1$.  
In other words, $\fbf_{16}=\fbf_2[t]/\langle t^4+t+1\rangle.$ The following set $S$ of seven matrices generates $\PGL_3(\fbf_{16}).$ The clique of the Caley graph of $\PGL_3(\fbf_{16})$ with respect to this set of generators is the complex obtained by taking the quotient of $X_G$ by a lattice $\Gamma,$ as above. This lattice is a congruence lattice of a lattice $\Gamma'$ which is constructed using a division algebra which splits at all places except at $1/y$ and $1/(y+1)$, at which it remains a division algebra.  

The set $S$ consists of the following seven matrices:

$$\left(\begin{array}{ccc}t+t^3& t^2& t+t^2\\ t& t^3& 1+t+t^2\\ t+t^2& 1+t^2& 1+t^3\end{array}\right)\hspace{4mm}\left(\begin{array}{ccc}1+t+t^2+t^3& t+t^2& 1+t^2\\ 1+t& t^2+t^3& 1\\ 1+t^2& t& t^3\end{array}\right)$$

$$\left(\begin{array}{ccc}1+t^2+t^3& 1+t^2& t\\ 1+t+t^2& t+t^3& t^2\\ t& 1+t& t^2+t^3\end{array}\right)\hspace{4mm}\left(\begin{array}{ccc}t+t^2+t^3& t& 1+t\\ 1& 1+t+t^2+t^3& t+t^2\\ 1+t& 1+t+t^2& t+t^3\end{array}\right)$$

$$\left(\begin{array}{ccc}1+t^3& 1+t& 1+t+t^2\\ t^2& 1+t^2+t^3& 1+t^2\\ 1+t+t^2& 1& 1+t+t^2+t^3\end{array}\right)\hspace{4mm}\left(\begin{array}{ccc}t^3& 1+t+t^2& 1\\ t+t^2& t+t^2+t^3& t\\ 1& t^2& 1+t^2+t^3\end{array}\right)$$

$$\left(\begin{array}{ccc}t^2+t^3& 1& t^2\\ 1+t^2& 1+t^3& 1+t\\ t^2& x+x^2& t+t^2+t^3\end{array}\right)$$

\bigskip
\noindent
{\bf An example in rank one case.} We begin with an example of an (arithmetic) lattice $\Lambda$ in $G=\PGL_2(\qbf_5)$,  given with a symmetric generating set of $\Lambda$ with $6$ elements, which acts simply transitively on $X_G.$ In other words $X_G,$ which is a $6$-regular tree, is the Caley graph of $\Lambda.$ This lattice $\Lambda$ is also used in~\cite{LPS} to construct explicit examples of ``Ramanujan graphs". Let 
$$\mathbf{H}(\zbf)=\{\alpha=a_0+a_1\mathbf{i}+a_2\mathbf{j}+a_3\mathbf{k}:\h a_i\in\zbf\}\hspace{3mm}$$ 
where $\mathbf{i}^2=\mathbf{j}^2=\mathbf{k}^2=-1$ and $\mathbf{i}\mathbf{j}=-\mathbf{j}\mathbf{i}=\mathbf{k}$.  For any $\alpha\in\mathbf{H}(\zbf)$ we let $\bar{\alpha}=a_0-a_1\mathbf{i}-a_2\mathbf{j}-a_3\mathbf{k}$ and let $N(\alpha)=\alpha\bar{\alpha}.$ Let $$\Lambda'=\{\alpha\in\mathbf{H}(\zbf): N(\alpha)=5^k,\h k\in\zbf\h\mbox{and}\h\alpha\equiv_21\}.$$ 

Now let $$\Lambda=\Lambda'/\sim$$ where 

$$\alpha\sim\beta \mbox{\ \ \  if\ \ \ } 5^{k_1}\alpha=\pm5^{k_2}\beta \mbox{\ \ \ for some $k_1,k_2\in\zbf.$}$$
 Note that $\Lambda$ is an (arithmetic) subgroup of $\PGL_2(\mathbf{Q}_5)$ and $[\alpha][\bar{\alpha}]=1.$ It is easy to see, and is shown in~\cite[Section 3]{LPS}, that $\Lambda$ is actually a free group in $\{\alpha_1,\alpha_2,\alpha_3\},$ where $N(\alpha_i)=5$ and $a_0>0$ for each $i=1,2,3.$
We identify $X_G$ with the Caley graph of $\Lambda$ with respect to the generating set $S=\{\alpha_1,\bar{\alpha_1},\alpha_2,\bar{\alpha_2},\alpha_3,\bar{\alpha_3}\}.$   

Now let $\Gamma$ be the kernel of the map $\Lambda\to \zbf/4\zbf$ given by $\alpha_i\mapsto i$ for $i=1,2,3.$ Then $B_\Gamma=X_G/\Gamma$ is the Cayley graph of $\zbf/4\zbf$ with respect to this generating set; that is, it is the complete graph with $4$ vertices. We now color the edges of $\bg$ with $3$ different colors so that the edges emanating from a vertex have $3$ different colors, and  we lift this to a coloring of $X_G$ using the $\Gamma$ action. 

Fix an arbitrarily large ball in $X_G$.  Consider the automorphism $\phi$ of the tree $X_G$ which fixes this ball pointwise and flips two rays corresponding to $\alpha_1$ and $\bar{\alpha_3}$ emanating from a vertex on the sphere and is identity everywhere else.  Then $\phi$ lies in the group of color-preserving automorphisms of this tree.  As the large ball was chosen arbitrarily, this argument proves that the group of color-preserving automorphisms of $X_G$ is not discrete.  
Of course we can replace different ``colors'' by different simplicial isomorphism types of triangulations of the corresponding simplices.  We thus have a contrast with the conclusion of Theorem~\ref{theorem:second}.



\bigskip
\noindent
Dept. of Mathematics\\
University of Chicago\\
5734 University Ave.\\
Chicago, IL 60637\\
E-mail: farb@math.uchicago.edu, amirmo@math.uchicago.edu
\medskip

\end{document}